\newtheorem{thm}{Theorem}[section]
\newtheorem{prop}[thm]{Proposition}
\newtheorem{defn}[thm]{Definition}
\newtheorem{rem}[thm]{\bf{Remark}}
\newtheorem{example}[thm]{\bf{Example}}
\numberwithin{equation}{section}
\begin{document}


\title{DEGREE POLYNOMIAL OF VERTICES IN A GRAPH AND ITS BEHAVIOR UNDER GRAPH OPERATIONS}
\author{Reza jafarpour-Golzari}
\address{Department of Mathematics, Payame Noor University
,P.O.BOX 19395-3697 Tehran, Iran; Department of Mathematics, Institute for Advanced Studies
in Basic Science (IASBS), P.O.Box 45195-1159, Zanjan, Iran}

\email{r.golzary@iasbs.ac.ir}

\thanks{{\scriptsize
\hskip -0.4 true cm MSC(2010): Primary: 05C07; Secondary: 05C31, 05C76.
\newline Keywords: Degree polynomial, Degree polynomial sequence, Degree sequence, Graph operation.\\
\\
\newline\indent{\scriptsize}}}

\maketitle


\begin{abstract}
In this paper, we introduce a new concept namely degree polynomial for vertices of a simple graph.
This notion leads to a concept namely degree polynomial sequence which is stronger than the concept of degree sequence. After obtaining the degree polynomial sequence for some well-known graphs, we prove a theorem which gives a necessary condition for realizability of a sequence of polynomials with coefficients in positive integers. Also we calculate the degree polynomial for vertises of join, Cartesian product, tensor product, and lexicographic product of two simple graphs and for vertices of the complement of a simple graph. Some examples, counterexamples, and open problems concerning to this subjects, is given as well.
\end{abstract}

\vskip 0.8 true cm


\section{\bf Introduction}
\vskip 0.4 true cm

Degree of vertices is one of the essential concepts in theory of graphs with various applications in branches of mathematics and some other fields as network, coding theory and biology (see for instance, \cite{Ber}, \cite{K}, \cite{Lub}, \cite{Mas}). The realizability of a non-increasing sequence of nonnegative integers, means the existence of a simple graph whose degree sequence be the same sequence, is an interesting subject concerning to concept of degree of vertices, with many wide applications (\cite{Aig}, \cite{Ber}, \cite{DuB}, \cite{Er}, \cite{Hak}, \cite{Hav}, \cite{Rit}, \cite{Tri}, \cite{Zre}). P. Er\H{o}ds and T. Gallai in \cite{Er} have presented a theorem which gives a criterion for realizability of a non-increasing sequence of nonnegative integers (Theorem 2.1). Also V. Havel in \cite{Hav} and S. L. Hakimi in \cite{Hak} have presented an algorithm which determines whether such a sequence is realizable or not. A simple graph whose degree sequence be a determined realizable sequence of integers, is not unique under isomorphism, necessarily. But all such graphs are common in some properties.

In this paper we newly introduce a concept, called "degree polynomial", for vertices of a simple graph. This notion leads to concept of degree polynomial sequence. The recent concept is deeper and stronger than the concept of degree sequence in graph theory. Since many topics and various applications arise from the concept of degree sequence, it seems that the new concept, degree polynomial sequence, can cause a wide outlook for future researches.
After obtaining the degree polynomial sequence for some well-known graphs as cycles, complete graphs, complete bipartite graphs, etc, we prove a theorem which gives a necessary condition for realizability of a sequence of polynomials with coefficients in positive integers. Also we study the behavior of degree polynomial, under several graph operations. More precisely, we calculate the degree polynomial for vertices of join, Cartesian product, tensor product, and lexicographic product of two simple graphs and also for vertices of the complement of a simple graph. Some important examples, counterexamples, and open problems are presented, as well.

\vskip 0.8 true cm

\section{\bf Preliminaries}
\vskip 0.4 true cm

In the sequel, we use \cite{Bon} for terminologies and notations on graphs.

Let $G$ be a simple graph. For vertices $u,v\in V(G)$, if $u$ is adjacent with $v$, we write $u\sim v$.

Let $G$ be a simple graph of order $n$. A non-increasing sequence of nonnegative integers $q=(d_{1},\ldots , d_{n})$ is said to be degree sequence of $G$, whenever every degrees of vertices of $G$ be a term of sequence. A sequence $q=(d_{1},\ldots , d_{n})$ of integers is realizable, if there exists a simple graph $G$, such that $q$ be the degree sequence of $G$. Since adding a finite number of isolated vertices to a graph, and deleting a finite number of such vertices from a nonempty graph makes no change in the degree of other vertices, we can consider only the case in which each $d_{i}, 1\leq i \leq n$ is positive.

Let $G$ and $H$ be simple graphs with disjoint vertex sets. The join of $G$ and $H$, denoted by $G\vee H$, is a simple graph with vertex set $V(G)\cup V(H)$, in which for two vertices $u$ and $v$, $u\sim v$ if and only if\\
(1) $u,v\in V(G)$ and $u\sim V$ (in $G$), or\\
(2) $u,v\in V(H)$ and $u\sim V$ (in $H$), or\\
(3) $u\in V(G), \ v\in V(H)$, or\\
(4) $u\in V(H), \ v\in V(G)$.

Let $G$ and $H$ be simple graphs. The Cartesian product of $G$ and $H$, denoted by $G\times H$, is a simple graph with vertex set $V(G)\times V(H)$, in which for two vertices $(u_{1}, v_{1})$ and $(u_{2}, v_{2}), (u_{1}, v_{1})\sim (u_{2}, v_{2})$, if and only if\\
(1) $u_{1}=u_{2}$ and $v_{1}\sim v_{2}$ (in $H$), or\\
(2) $v_{1}=v_{2}$ and $u_{1}\sim u_{2}$ (in $G$).

Also, the tensor product of $G$ and $H$, denoted by $G\otimes H$, is a simple graph with vertex set $V(G)\times V(H)$, in which for two vertices $(u_{1}, v_{1})$ and $(u_{2}, v_{2}), (u_{1}, v_{1})\sim (u_{2}, v_{2})$, if and only if $u_{1}\sim u_{2}$ (in $G$) and $v_{1}\sim v_{2}$ (in $H$). Finally, the lexicographic product of $G$ and $H$, denoted by $G[H]$, is a simple graph with vertex set $V(G)\times V(H)$, in which for two vertices $(u_{1}, v_{1})$ and $(u_{2}, v_{2}), (u_{1}, v_{1})\sim (u_{2}, v_{2})$, if and only if\\
(1) $u_{1}\sim u_{2}$ (in $G$), or\\
(2) $u_{1}=u_{2}$ and $v_{1}\sim v_{2}$ (in $H$).

For a simple graph $G$, the complement of $G$, denoted by $G^{c}$, is a simple graph with vertex set $V(G)$, in which for two vertices $u$ and $v, \ u\sim v$, if and only if $u$ is not adjacent with $v$ in $G$.

Let $K$ be a field and $S=K[x_{1}, \ldots, x_{n}]$ be the polynomial ring in $n$ variables with coefficients in $K$ and let $f\in S$ be any nonzero polynomial. The polynomial $f$ is represented uniquely in the form:
\[f=\sum_{a_{u}(\neq 0)\in K}a_{u}u\]
in terms of distinct monomials $u$ in $S$ where the set of $u$'s is the support of $f$ \cite{Her}. We will call $a_{u}u$'s, the essential terms of $f$.

\vskip 0.8 true cm

\section{\bf Degree sequence}
\vskip 0.4 true cm

Two non-isomorphic simple graphs can have the same degree sequence. Consider the graphs below for example.
\begin{center}
\definecolor{ududff}{rgb}{0.30196078431372547,0.30196078431372547,1.}
\begin{tikzpicture}[line cap=round,line join=round,>=triangle 45,x=2.0cm,y=1.0cm]
\clip(-3.85,-2.8) rectangle (3.,1.6);
\draw [line width=0.8pt] (-3.4462095730918487,1.1426511627906977)-- (-3.4462095730918487,0.2606046511627907);
\draw [line width=0.8pt] (-3.4462095730918487,0.2606046511627907)-- (-3.4462095730918487,-0.5613023255813954);
\draw [line width=0.8pt] (-2.6673221216041387,1.1226046511627907)-- (-2.6673221216041396,0.24055813953488367);
\draw [line width=0.8pt] (-2.6673221216041396,0.24055813953488367)-- (-2.6673221216041387,-0.5813488372093023);
\draw [line width=0.8pt] (-2.6673221216041396,0.24055813953488367)-- (-3.4462095730918487,0.2606046511627907);
\draw [line width=0.8pt] (-2.358706338939197,1.1226046511627907)-- (-2.373402328589908,0.2405581395348837);
\draw [line width=0.8pt] (-2.373402328589908,0.2405581395348837)-- (-2.373402328589908,-0.5813488372093023);
\draw [line width=0.8pt] (-1.5651228978007752,1.1226046511627907)-- (-1.5651228978007752,0.2405581395348837);
\draw [line width=0.8pt] (-1.5651228978007752,0.2405581395348837)-- (-1.5651228978007752,-0.5613023255813954);
\draw [line width=0.8pt] (-1.5651228978007752,0.2405581395348837)-- (-2.373402328589908,0.2405581395348837);
\draw [line width=0.8pt] (-0.0524838292367391,0.7216744186046511)-- (0.5657956015523941,0.7216744186046511);
\draw [line width=0.8pt] (0.5657956015523941,0.7216744186046511)-- (0.5751875808538171,-0.38088372093023254);
\draw [line width=0.8pt] (0.5751875808538171,-0.38088372093023254)-- (-0.0624838292367391,-0.36083720930232555);
\draw [line width=0.8pt] (-0.0524838292367391,0.7216744186046511)-- (-0.0624838292367391,-0.36083720930232555);
\draw [line width=0.8pt] (-0.0524838292367391,0.7216744186046511)-- (0.5751875808538171,-0.38088372093023254);
\draw [line width=0.8pt] (0.5657956015523941,0.7216744186046511)-- (-0.0624838292367391,-0.36083720930232555);
\draw [line width=0.8pt] (1.8443467011642956,1.1626976744186046)-- (1.138939197930143,1.1626976744186046);
\draw [line width=0.8pt] (1.829650711513584,0.5011627906976744)-- (1.1242432082794316,0.4811162790697674);
\draw [line width=0.8pt] (1.829650711513584,-0.1603720930232558)-- (1.1242432082794316,-0.1603720930232558);
\draw [line width=0.8pt] (1.829650711513584,-0.781813953488372)-- (1.10954721862872,-0.7617674418604651);
\begin{scriptsize}
\draw [fill=ududff] (-3.4462095730918487,1.1426511627906977) circle (1.5pt);
\draw [fill=ududff] (-3.4462095730918487,0.2606046511627907) circle (1.5pt);
\draw [fill=ududff] (-3.4462095730918487,-0.5613023255813954) circle (1.5pt);
\draw [fill=ududff] (-2.6673221216041387,1.1226046511627907) circle (1.5pt);
\draw [fill=ududff] (-2.6673221216041396,0.24055813953488367) circle (1.5pt);
\draw [fill=ududff] (-2.6673221216041387,-0.5813488372093023) circle (1.5pt);
\draw [fill=ududff] (-2.358706338939197,1.1226046511627907) circle (1.5pt);
\draw [fill=ududff] (-2.373402328589908,0.2405581395348837) circle (1.5pt);
\draw [fill=ududff] (-2.373402328589908,-0.5813488372093023) circle (1.5pt);
\draw [fill=ududff] (-1.5651228978007752,1.1226046511627907) circle (1.5pt);
\draw [fill=ududff] (-1.5651228978007752,0.2405581395348837) circle (1.5pt);
\draw [fill=ududff] (-1.5651228978007752,-0.5613023255813954) circle (1.5pt);
\draw [fill=ududff] (-0.0524838292367391,0.7216744186046511) circle (1.5pt);
\draw [fill=ududff] (0.5657956015523941,0.7216744186046511) circle (1.5pt);
\draw [fill=ududff] (0.5751875808538171,-0.38088372093023254) circle (1.5pt);
\draw [fill=ududff] (-0.0624838292367391,-0.36083720930232555) circle (1.5pt);
\draw [fill=ududff] (1.8443467011642956,1.1626976744186046) circle (1.5pt);
\draw [fill=ududff] (1.138939197930143,1.1626976744186046) circle (1.5pt);
\draw [fill=ududff] (1.829650711513584,0.5011627906976744) circle (1.5pt);
\draw [fill=ududff] (1.1242432082794316,0.4811162790697674) circle (1.5pt);
\draw [fill=ududff] (1.829650711513584,-0.1603720930232558) circle (1.5pt);
\draw [fill=ududff] (1.1242432082794316,-0.1603720930232558) circle (1.5pt);
\draw [fill=ududff] (1.829650711513584,-0.781813953488372) circle (1.5pt);
\draw [fill=ududff] (1.10954721862872,-0.7617674418604651) circle (1.5pt);
\draw (-2.65954721862872,-1.0617674418604651) node[anchor=north west] {$G_{1}$};
\draw (0.7254721862872,-1.0617674418604651) node[anchor=north west] {$G_{2}$};
\end{scriptsize}
\end{tikzpicture}
  \end{center}
  \label{graph 1}

A realizable sequence of integers is also named a graphical sequence. If  $q=(d_{1},\ldots , d_{n})$ be a graphical sequence realized by the graph $G$, by elementary properties of simple graphs, we have:

(1) The sum of $d_{i}$'s, $1\leq i\leq n$, is even. Therefore the number of  odd $d_{i}$'s is even and thus the number of even $d_{i}$'s is even if and only if $n$ is even.

(2) For every $1\leq i\leq n$, $d_{i}\leq n-1$.

(3) If $G$ has no isolated vertices, then $2[\frac{n+1}{2}]\leq \sum^{n}_{i=1}d_{i}\leq n(n-1)$.

(4) If no $d_{i}$ be zero, then at lest two of them are equal
(by the pigeonhole principle).

(3) and (4) are implied from (2). 
\begin{thm}
(Er\H{o}ds-Gallai Theorem \cite{Er}) A non-increasing sequence $\phi=(a_{1},\ldots , a_{n})$ of nonnegative integers is graphic if and only if \\
a) $\sum^{n}_{i=1}a_{i}$ be even,\\
b) $\sum^{j}_{i=1}a_{i}- j(j-1)\leq\sum^{n}_{k=j+1}min (j,a_{k}) \ \ \ \  (j=1,\ldots , n-1)$.
\end{thm}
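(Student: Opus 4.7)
The plan is to split the biconditional into necessity (straightforward) and sufficiency (substantially harder), handling each direction with different techniques.

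For necessity, I would assume $G$ is a simple graph with vertices $v_1,\ldots,v_n$ labelled so that $\deg(v_i)=a_i$. Condition (a) is immediate from the handshake lemma, since $\sum_{i=1}^n a_i=2|E(G)|$. For condition (b), fix $j\in\{1,\ldots,n-1\}$, set $A=\{v_1,\ldots,v_j\}$ and $B=\{v_{j+1},\ldots,v_n\}$, and decompose $\sum_{i=1}^j a_i = 2e(A)+e(A,B)$, where $e(A)$ counts edges inside $A$ and $e(A,B)$ counts edges between $A$ and $B$. Since $e(A)\leq\binom{j}{2}$, the first term is at most $j(j-1)$; and since each $v_k\in B$ can send at most $\min(j,a_k)$ edges into $A$ (the bound $j$ coming from $|A|=j$, the bound $a_k$ from the degree of $v_k$), one has $e(A,B)\leq\sum_{k=j+1}^n\min(j,a_k)$. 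Rearranging yields (b).

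For sufficiency, I would argue by induction on $n$, reducing via the Havel--Hakimi operation: delete $a_1$ from the sequence and subtract $1$ from the next $a_1$ largest entries, then reorder. Since it is known that the original sequence is graphic iff the reduced sequence is, it suffices to verify that the reduction preserves conditions (a) and (b). Parity is obvious, as the total sum drops by $2a_1$. The main content lies in (b): the right-hand side $\sum\min(j,a'_k)$ and the left-hand side $\sum_{i=1}^j a'_i - j(j-1)$ of the reduced inequality must be compared with their original counterparts, with careful attention to whether the index $j$ falls inside the block of decremented entries, and to possible reorderings among entries that were equal before the subtraction.

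The hard part will be the bookkeeping in this induction step for condition (b). Several cases arise depending on whether each position in question was decremented and on whether formerly-equal entries swap order afterwards; each case requires exploiting the original inequality at a possibly different value of $j$. As a fallback if the case analysis proves unmanageable, I would pursue a direct constructive argument: connect the highest-degree vertex greedily to the $a_1$ next largest, and use an edge-switching lemma (replacing a conflicting pair $xy,uv$ by $xu,yv$) to repair simplicity whenever necessary, relying on (b) to guarantee that a valid switch always exists at each stage.
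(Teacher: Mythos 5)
The paper does not actually prove this statement: it is quoted as a classical result from the cited reference \cite{Er}, so there is no in-paper proof to compare against. Judged on its own, your necessity argument is complete and correct; the decomposition $\sum_{i=1}^{j} a_i = 2e(A)+e(A,B)$ with $e(A)\leq\binom{j}{2}$ and $e(A,B)\leq\sum_{k=j+1}^{n}\min(j,a_k)$ is the standard one, and condition (a) is indeed just the handshake lemma.

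The sufficiency direction, however, is a plan rather than a proof, and the step you defer is exactly the mathematical content of the theorem. Your induction requires the claim that the Havel--Hakimi reduction (delete $a_1$, decrement the next $a_1$ largest entries, re-sort) sends a sequence satisfying (a) and (b) to another sequence satisfying (a) and (b). You explicitly leave this verification undone (``the hard part will be the bookkeeping''), and you cannot obtain it from the equivalence ``graphic iff the reduced sequence is graphic,'' since for a not-yet-realized sequence that equivalence says nothing about conditions (a),(b) --- deducing preservation of (b) from graphicality would presuppose the very theorem being proved. (Invoking Havel--Hakimi is harmless only for the direction you actually need, namely that a realization of the reduced sequence yields one of the original by adding back a vertex; that part is fine.) The omitted case analysis is genuinely delicate, which is why the standard inductive proof in the literature (Choudum) uses a much lighter reduction --- subtract $1$ from $a_1$ and $1$ from the last positive entry, verify (b) for that sequence, and repair the resulting realization by a single edge exchange. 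Your fallback does not close the gap either: greedy attachment plus $2$-switches is the proof of the Havel--Hakimi theorem itself, and as described it never explains where hypothesis (b) enters; a genuinely constructive proof of Erd\H{o}s--Gallai sufficiency (in the style of Tripathi--Venugopalan--West) needs the notion of a subrealization and a nontrivial extension argument. So necessity stands, but sufficiency as written has a genuine gap.
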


\vskip 0.8 true cm

\section{\bf Degree polynomial}
\vskip 0.4 true cm

Before all, we introduce some notation for convenience. For a polynomial $f(x)=\sum^{n}_{i=1}a_{i}x^{i}\in\mathbb{R}[x]$ with $a_{n}\neq0$, we show the sum of $a_{i}$'s for $1\leq i\leq n$, by $sc(f)$. Also $sec(f)$ and $soc(f)$, use for the sum of $a_{i}$'s for even $i$, and sum of $a_{i}$'s for odd $i$, respectively. We define $sc(0)=0$, as well.

We introduce a total order $<_{pol}$
on the set of all nonzero polynomials with coefficients in nonnegative integers such that $<_{pol}$ compares two distinct polynomials $f= \sum^{n}_{i=0}a_{i}x^{i}$ and $g=\sum^{m}_{i=0}b_{i}x^{i}$ with coefficients in nonnegative integers and with $a_{n},b_{m}\neq0$, as follows:

If $sc(f)\neq sc(g)$, then each one of $f$ and $g$ whose sum of coefficients is greater (as an integer), will be greater;

If $sc(f)=sc(g)$, then supposing that $i_{1}=\max\{i| \ a_{i}b_{i}\neq0\}$, if $a_{i_{1}}\neq b_{i_{1}}$, then each one of $f$ and $g$ which has greater coefficient in $x^{i_{1}}$, will be greater; 

If $sc(f)=sc(g)$ and $a_{i_{1}}=b_{i_{1}}$, then supposing that $i_{2}=\max\{i| \ i<i_{1},\ a_{i}b_{i}\neq0\}$, if $a_{i_{2}}\neq b_{i_{2}}$, then each one of $f$ and $g$ which has greater coefficient in $x^{i_{2}}$, will be greater;

Continue on.

For example,
\[ 2x^{4}+12x^{3}>_{pol} 3x^{5}+x^{2},\]
\[2x^{4}+12x^{2}<_{pol} 2x^{5}+12x^{2}, \ x^{5}+13x^{2},\]
\[2x^{4}+12x^{2}>_{pol} 2x^{4}+11x^{2}+x.\]

\vskip 0.2 true cm

Let $f=\sum_{a_{i}\neq 0}a_{i}x^{i}$ be a nonzero polynomial in $\mathbb{R}[x]$ with coefficients in nonnegative integers where $a_{i}x^{i}$'s are the essential terms of $f$. For $n\in \mathbb{N}$, we denote the polynomial $\sum_{a_{i}\neq 0}a_{i}x^{in}$ by $f^{\curlywedge\times n}$. Also we set $0^{\curlywedge\times n}=0$. If $\deg f\leq n$, we denote the polynomial $\sum_{a_{i}\neq 0} a_{i} x^{n-i}$ by $f^{\curlywedge n-}$. Also we set $0^{\curlywedge n-}=0$.
\begin{defn}
Let $f=\sum_{a_{i}\neq 0}a_{i}x^{i}$, $g=\sum_{b_{j}\neq 0}b_{j}x^{j}$ be two nonzero polynomials in $\mathbb{R}[x]$ with coefficients in nonnegative integers where $a_{i}x^{i}$'s and $b_{j}x^{j}$'s are the essential terms of $f$ and $g$, respectively. The tensor product of $f$ and $g$, denoted by $f\otimes g$, is the polynomial $\sum c_{t}x^{t}$ in which $t$'s are the distinct products of $i$'s and $j$'s, and
\[c_{t}= \sum_{i.j=t}a_{i}b_{j}.\]
Also we set $0\otimes f=0, f\otimes 0=0, 0\otimes f=0$, where $0$ is the zero polynomial.
\end{defn}
Under the conditions of definition 3.1., it is observed simply that, first, $f\otimes g$ can be achieved by tensor-multiplying the essential terms of $f$ by the essential terms of $g$, one by one, and secondly for each $f$ and $g$ with variable $x$ and coefficients in nonnegative integers, $f\otimes g=g\otimes f$.

Now we introduce a new concept, that is the concept of degree polynomial.
\begin{defn}
Let $G$ be a simple graph. For a vertex $v$ of $G$, the degree polynomial of $v$ denoted by $\rm{dp}(v)$, is a polynomial with coefficients in nonnegative integers, in which the coefficient of $x^{i}$ is the number of neighbors of $v$ each of degree $i$; Especially, for an isolated vertex $v$, $\rm{pd}(v)=0$. 
\end{defn}
\begin{example}
Let $G$ be the simple graph with following representation.
\begin{center}
\definecolor{ududff}{rgb}{0.30196078431372547,0.30196078431372547,1.}
\begin{tikzpicture}[line cap=round,line join=round,>=triangle 45,x=1.0cm,y=1.0cm]
\clip(5.3,1.9999999999998) rectangle (9.56,5.3);
\draw [line width=1.2pt] (7.06,4.18)-- (6.1,3.4);
\draw [line width=1.2pt] (6.1,3.4)-- (7.38,3.38);
\draw [line width=1.2pt] (7.06,4.18)-- (7.38,3.38);
\draw [line width=1.2pt] (7.38,3.38)-- (8.9,3.38);
\draw (5.65,3.58) node[anchor=north west] {$a$};
\draw (7.0,4.66) node[anchor=north west] {$b$};
\draw (7.24,3.4) node[anchor=north west] {$c$};
\draw (8.89,3.7) node[anchor=north west] {$d$};
\draw (7.28,2.82) node[anchor=north west] {$G$};
\begin{scriptsize}
\draw [fill=ududff] (7.06,4.18) circle (1.5pt);
\draw [fill=ududff] (6.1,3.4) circle (1.5pt);
\draw [fill=ududff] (7.38,3.38) circle (1.5pt);
\draw [fill=ududff] (8.9,3.38) circle (1.5pt);
\end{scriptsize}
\end{tikzpicture}
  \end{center}
  \label{graph 2}
\end{example}
We have: 
\[\rm{dp}(a)=x^{2}+x^{3},\]
\[\rm{dp}(b)=x^{2}+x^{3},\]
\[\rm{dp}(c)=2x^{2}+x,\]
\[\rm{dp}(d)=x^{3}.\]

\vskip 0.2 true cm

Since adding a finite number of isolated vertices to a simple graph, and deleting a finite number of such vertices from a nonempty simple graph makes no change in the degree polynomials of other vertices, we will consider only the graphs which has no isolated vertices.
\begin{defn}
For a simple graph $G$, the degree polynomial of $G$, denoted by $\rm{dp}(G)$, is the polynomial $\sum_{i}t_{i}x^{i}$ in $\mathbb{R}[x]$, in which $t_{i}$ is the number of vertices of $G$, each of degree $i$ (specially, $t_{0}$ is the number of isolated vertices of $G$). If $\Delta$ be the maximum degree of $G$, $\rm{dp}(G)$ is of degree $\Delta$.
\end{defn}
It is obvious that if $n$ be the order of $G$, then the sum of all coefficients of $\rm{dp}(G)$ equals $n$. Also, if $m$ be the size of $G$, then the sum of all coefficients of the derivative of $f$ (with respect to $x$) equals $2m$.
\begin{defn}
For a simple graph $G$ of order $n$ without any isolated vertex, a sequence $q=(f_{1},f_{2},\ldots , f_{n})$ of polynomials is said to be the degree polynomial sequence of $G$, if\\
(a) $f_{1}\geq_{pol} \ldots \geq_{pol}f_{n}$,\\
(b) each degree polynomial of vertices of $G$, be a term of sequence.
\begin{example}
For the graph $G$ in Example 3.2, the degree polynomial sequence is:
\[2x^{2}+x, x^{2}+x^{3}, x^{2}+x^{3}, x^{3}.\]
\end{example}
\begin{prop}
Let $G$ be a nonempty simple graph. $G$ is r-regular, if and only if each term of the degree polynomial sequence of $G$ be in the form $rx^{r}$.
\end{prop}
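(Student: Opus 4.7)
The plan is to unwind both implications directly from the definition of the degree polynomial of a vertex. In the forward direction, I would assume that $G$ is $r$-regular, fix an arbitrary vertex $v \in V(G)$, and observe that $v$ has exactly $r$ neighbors, each of which is itself of degree $r$. By the definition of $\rm{dp}(v)$, the coefficient of $x^{r}$ therefore counts all $r$ neighbors while the coefficient of every $x^{i}$ with $i \neq r$ is zero. Hence $\rm{dp}(v) = rx^{r}$. Since $v$ was arbitrary, every entry of the degree polynomial sequence of $G$ is $rx^{r}$.

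For the converse, I would suppose that the degree polynomial sequence of $G$ has every term equal to $rx^{r}$, and again pick an arbitrary vertex $v$. Since the total number of neighbors of $v$ equals the sum of the coefficients of $\rm{dp}(v)$ (each neighbor contributes $1$ to exactly one coefficient, corresponding to its degree), we get $\deg(v) = sc(rx^{r}) = r$. As $v$ was arbitrary, every vertex of $G$ has degree $r$, so $G$ is $r$-regular.

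The argument is essentially a bookkeeping check, so I do not anticipate a genuine obstacle. The only point that requires a line of care is the converse: one must note explicitly that the sum of the coefficients of $\rm{dp}(v)$ recovers $\deg(v)$ (which uses that $G$ is simple, so distinct neighbors are counted once), rather than trying to read off $\deg(v)$ directly from the exponents appearing in $\rm{dp}(v)$, which a priori only record the degrees of the \emph{neighbors} of $v$.
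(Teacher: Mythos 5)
Your proposal is correct and follows essentially the same route as the paper: in the forward direction both arguments note that every neighbor has degree $r$ and there are exactly $r$ of them, and in the converse both recover $\deg(v)=r$ from the fact that $\rm{dp}(v)=rx^{r}$ accounts for all neighbors of $v$. Your explicit remark that $\deg(v)$ is read from the sum of coefficients (not the exponents) is a slightly more careful phrasing of the same step the paper performs.
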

\begin{proof}
Let $G$ be r-regular. For $v\in V(G)$, Since each neighbor of $v$ is of degree $r$, $dp(v)$ is in the form $kx^{r}$. Since $v$ itself has exactly $r$ neighbor, $k=r$.

Conversely, let each term of the degree polynomial sequence of $G$ be $rx^{r}$. Thus each $v \in V(G)$ has exactly $r$ neighbor of degree $r$, and has no another neighbor. Therefore each $v \in V(G)$ is of degree $r$, and $G$ is r-regular.
\end{proof}
If $G$ be a nontrivial complete graph, $K_{n}$, it is obvious that the degree polynomial sequence of $G$ is:
\[(n-1)x^{n-1}, \ldots , (n-1)x^{n-1}\]
where the number of terms is $n$.

If $G$ be a path with $n$ vertices, $P_{n}$, then if $n=2$, the degree polynomial sequence of $G$ is:
\[x, x,\]
if $n=3$, that will be:
\[2x, x^{2}, x^{2},\]
if $n=4$, that will be:
\[x+x^{2}, x+x^{2}, x^{2}, x^{2},\]
and finally, if $n\geq 5$, that will be:
\[2x^{2}, \ldots , 2x^{2}, x+x^{2}, x+x^{2}, x^{2}, x^{2}\]
where the number of terms $2x^{2}$, is $n-4$.

If $G$ be a cycle $C_{n} \ (n\geq 3)$, then, the degree polynomial sequence of $G$ is:
\[2x^{2}, \ldots , 2x^{2},\]
where the number of terms is $n$.

If $G$ be a complete bipartite graph, $K_{r,s}$ where $r\geq s$, then the degree polynomial sequence of $G$ is:
\[rx^{s}, \ldots , rx^{s}, sx^{r},\ldots , sx^{r},\]
where $s$ terms are $rx^{s}$ and $r$ terms are $sx^{r}$.
\end{defn}
\begin{rem}
Supposing $q=(f_{1}, \ldots , f_{n})$ be the degree polynomial sequence of a simple graph $G$, if $f_{i}$ be the degree polynomial of vertex $v_{i}$, since $sc(f_{i})$ be the degree of $v_{i}$, the degree sequence for $G$ is:
\[sc(f_{1}), \ldots , sc(f_{n}).\]
Consequently, if a sequence $q=(f_{1},\ldots , f_{n})$ of nonzero polynomials be realizable, then the sequence
\[sc(f_{1}), sc(f_{2}), \ldots , sc(f_{n})\]
 of integers is realizable. The following example shows that the inverse case is not established. 
\end{rem}
\begin{example}
Consider the sequence:
\[2x, x^{2}, x, x, x\]
of nonzero polynomials with coefficients in nonnegative integers. Although the sequence:
\[sc(2x), sc(x^{2}), sc(x), sc(x), sc(x)\]
that is the sequence:
\[2,1,1,1,1,\]
is realized by simple graph with representation:

\begin{center}
\definecolor{ududff}{rgb}{0.30196078431372547,0.30196078431372547,1.}
\definecolor{cqcqcq}{rgb}{0.7529411764705882,0.7529411764705882,0.7529411764705882}
\begin{tikzpicture}[line cap=round,line join=round,>=triangle 45,x=1.0cm,y=1.0cm]
\clip(5.8,3.) rectangle (9.,5.);
\draw [line width=1.2pt,] (7.98,4.44)-- (6.72,4.42);
\draw [line width=1.2pt,] (8.38,3.64)-- (7.36,3.62);
\draw [line width=1.2pt,] (7.36,3.62)-- (6.32,3.6);
\begin{scriptsize}
\draw [fill=ududff] (4.16,4.88) circle (2.5pt);
\draw [fill=ududff] (7.98,4.44) circle (1.5pt);
\draw [fill=ududff] (6.72,4.42) circle (1.5pt);
\draw [fill=ududff] (8.38,3.64) circle (1.5pt);
\draw [fill=ududff] (7.36,3.62) circle (1.5pt);
\draw [fill=ududff] (6.32,3.6) circle (1.5pt);
\end{scriptsize}
\end{tikzpicture}
\end{center}
\label{graph 2}
but the sequence
\[2x, x^{2}, x, x, x\]
is not realizable, by Theorem 3.10 (part c).
\end{example}
The following example shows that two simple graphs with the same degree sequence, can have different degree polynomial sequences.
\begin{example}
Consider the graphs $G_{1}$ and $G_{2}$ with following representations.
\begin{center}
\definecolor{ududff}{rgb}{0.30196078431372547,0.30196078431372547,1.}
\begin{tikzpicture}[line cap=round,line join=round,>=triangle 45,x=1.0cm,y=1.0cm]
\clip(3.7,2.) rectangle (9.6,6.);
\draw [line width=0.8pt] (8.76,4.84)-- (7.6,4.86);
\draw [line width=0.8pt] (7.6,4.86)-- (7.14,3.98);
\draw [line width=0.8pt] (7.14,3.98)-- (7.6,3.1);
\draw [line width=0.8pt] (7.6,3.1)-- (8.74,3.1);
\draw [line width=0.8pt] (8.74,3.1)-- (9.18,3.94);
\draw [line width=0.8pt] (9.18,3.94)-- (8.76,4.84);
\draw [line width=0.8pt] (5.94,3.96)-- (5.48,4.82);
\draw [line width=0.8pt] (5.48,4.82)-- (4.36,4.82);
\draw [line width=0.8pt] (4.36,4.82)-- (3.94,3.96);
\draw [line width=0.8pt] (3.94,3.96)-- (4.34,3.12);
\draw [line width=0.8pt] (4.34,3.12)-- (5.5,3.12);
\draw [line width=0.8pt] (5.94,3.96)-- (5.5,3.12);
\draw [line width=0.8pt] (8.76,4.84)-- (7.14,3.98);
\draw [line width=0.8pt] (5.94,3.96)-- (3.94,3.96);
\draw (4.66,2.88) node[anchor=north west] {$G_{1}$};
\draw (7.9,2.84) node[anchor=north west] {$G_{2}$};
\begin{scriptsize}
\draw [fill=ududff] (8.76,4.84) circle (1.5pt);
\draw [fill=ududff] (7.6,4.86) circle (1.5pt);
\draw [fill=ududff] (7.14,3.98) circle (1.5pt);
\draw [fill=ududff] (7.6,3.1) circle (1.5pt);
\draw [fill=ududff] (8.74,3.1) circle (1.5pt);
\draw [fill=ududff] (9.18,3.94) circle (1.5pt);
\draw [fill=ududff] (5.94,3.96) circle (1.5pt);
\draw [fill=ududff] (5.48,4.82) circle (1.5pt);
\draw [fill=ududff] (4.36,4.82) circle (1.5pt);
\draw [fill=ududff] (3.94,3.96) circle (1.5pt);
\draw [fill=ududff] (4.34,3.12) circle (1.5pt);
\draw [fill=ududff] (5.5,3.12) circle (1.5pt);
\end{scriptsize}
\end{tikzpicture}
\end{center}
\label{graph 2}
For both $G_{1}$ and $G_{2}$, the degree sequence is $3, 3, 2 , 2, 2, 2.$ But the degree polynomial sequence for $G_{1}$, is:
\[2x^{2}+x^{3}, 2x^{2}+x^{3}, x^{2}+x^{3}, x^{2}+x^{3}, x^{2}+x^{3}, x^{2}+x^{3},\]
and for $G_{2}$, is:
\[2x^{2}+x^{3}, 2x^{2}+x^{3}, 2x^{3}, x^{2}+x^{3}, x^{2}+x^{3}, 2x^{2}.\]
\end{example}
Two non-isomorphic graphs can have the same degree polynomial sequences, as the following example shows.
\begin{example}
Consider two graphs, $G_{1}$ and $G_{2}$, with the following representations:

\begin{center}
\definecolor{ududff}{rgb}{0.30196078431372547,0.30196078431372547,1.}
\begin{tikzpicture}[line cap=round,line join=round,>=triangle 45,x=1.0cm,y=1.0cm]
\clip(2.5,2.) rectangle (9.,5.7);
\draw [line width=0.8pt] (8.72,4.56)-- (8.04,5.42);
\draw [line width=0.8pt] (8.04,5.42)-- (7.32,4.6);
\draw [line width=0.8pt] (7.32,4.6)-- (7.3,3.94);
\draw [line width=0.8pt] (7.3,3.94)-- (8.04,3.16);
\draw [line width=0.8pt] (8.04,3.16)-- (8.74,3.9);
\draw [line width=0.8pt] (8.74,3.9)-- (8.72,4.56);
\draw [line width=0.8pt] (6.14,3.88)-- (5.54,4.86);
\draw [line width=0.8pt] (3.12,4.84)-- (2.6,3.88);
\draw [line width=0.8pt] (2.6,3.88)-- (3.64,3.9);
\draw [line width=0.8pt] (3.64,3.9)-- (5.04,3.88);
\draw [line width=0.8pt] (6.14,3.88)-- (5.04,3.88);
\draw (4.08,2.74) node[anchor=north west] {$G_{1}$};
\draw (7.8,2.76) node[anchor=north west] {$G_{2}$};
\draw [line width=0.8pt] (3.64,3.9)-- (3.12,4.84);
\draw [line width=0.8pt] (5.04,3.88)-- (5.54,4.86);
\draw [line width=0.8pt] (8.04,5.42)-- (8.04,3.16);
\begin{scriptsize}
\draw [fill=ududff] (8.72,4.56) circle (1.5pt);
\draw [fill=ududff] (8.04,5.42) circle (1.5pt);
\draw [fill=ududff] (7.32,4.6) circle (1.5pt);
\draw [fill=ududff] (7.3,3.94) circle (1.5pt);
\draw [fill=ududff] (8.04,3.16) circle (1.5pt);
\draw [fill=ududff] (8.74,3.9) circle (1.5pt);
\draw [fill=ududff] (6.14,3.88) circle (1.5pt);
\draw [fill=ududff] (5.54,4.86) circle (1.5pt);
\draw [fill=ududff] (3.12,4.84) circle (1.5pt);
\draw [fill=ududff] (2.6,3.88) circle (1.5pt);
\draw [fill=ududff] (3.64,3.9) circle (1.5pt);
\draw [fill=ududff] (5.04,3.88) circle (1.5pt);
\end{scriptsize}
\end{tikzpicture}
\end{center}
\label{graph 2}
It is obvious that $G_{1}$ and $G_{2}$ are not isomorphic. However, the degree polynomial sequence for both of them is:
\[2x^{2}+x^{3}, 2x^{2}+x^{3}, x^{2}+x^{3}, x^{2}+x^{3}, x^{2}+x^{3}, x^{2}+x^{3},\]
\end{example}
Now we prove a theorem which gives a necessary condition for realizability of a sequence of polynomials with coefficients in nonnegative integers. 
\begin{thm}
If $G$ be a simple graph without any isolated vertices, and $q=(f_{1}, \ldots , f_{n})$ where $f_{1}\geq_{pol} \ldots \geq_{pol} f_{n}$ be the degree polynomial sequence of $G$, then,
\\
(a) $\sum^{n}_{i=1}sc(f_{i})$ is even,
\\
(b) for each nonzero coefficient $k$ of a term $kx^{i}$ in the degree polynomial of a vertex $v$, there are at least $k$ distinct vertices $v_{1}, \ldots , v_{k}$, all distinct from $v$, such that:
\[sc(\rm{dp}(v_{1}))= \ldots = sc(\rm{dp}(v_{k}))= i,\]
\\
(c) $\sum_{sc(f_{j}) \ is \ odd} sec(f_{j})$ and $\sum_{sc(f_{j}) \ is \ even} sec(f_{j})$ are even. 
\end{thm}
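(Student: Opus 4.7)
The unifying observation is Remark 3.9: $sc(f_i)$ equals $\deg(v_i)$, so every quantity $sc(f_j)$ in the statement may be replaced by the degree of the corresponding vertex. Similarly, unpacking Definition 3.2, the coefficient of $x^i$ in $\rm{dp}(v)$ is precisely the number of neighbors of $v$ of degree $i$; hence $sec(\rm{dp}(v))$ equals the number of neighbors of $v$ whose degree is even. These two translations reduce the whole theorem to standard edge-counting.

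For (a), the handshake lemma immediately gives $\sum_{i=1}^{n} sc(f_i) = \sum_{v \in V(G)} \deg(v) = 2|E(G)|$, which is even. For (b), I would take $v_1,\ldots,v_k$ to be the $k$ neighbors of $v$ of degree $i$ guaranteed by Definition 3.2: these are pairwise distinct and, because $G$ is simple, distinct from $v$, and each satisfies $sc(\rm{dp}(v_j)) = \deg(v_j) = i$.

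The substance lies in (c). Let $V_e$ and $V_o$ denote the sets of vertices of even and odd degree, respectively. Double-counting the ordered pairs $(v,u)$ with $v \sim u$ and $\deg(u)$ even, split according to the parity of $\deg(v)$, yields
\[\sum_{v \in V_e} sec(\rm{dp}(v)) = 2\, e(V_e), \qquad \sum_{v \in V_o} sec(\rm{dp}(v)) = e(V_o, V_e),\]
where $e(V_e)$ is the number of edges inside $V_e$ and $e(V_o, V_e)$ is the number of edges joining $V_o$ to $V_e$. The first sum is manifestly even. For the second, I would apply the handshake identity restricted to $V_e$, namely $\sum_{v \in V_e} \deg(v) = 2\, e(V_e) + e(V_o, V_e)$; the left-hand side is a sum of even numbers, forcing $e(V_o, V_e)$ to be even as well.

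The only real obstacle is bookkeeping in (c): I must be careful not to confuse $sec(\rm{dp}(v))$ (a sum of coefficients of a polynomial attached to $v$) with any kind of count of even-degree vertices, and I must track ordered versus unordered incidences when passing between the two displayed identities. Beyond this, the whole theorem is the handshake lemma applied once globally and once restricted to $V_e$.
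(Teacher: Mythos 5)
Your proposal is correct and follows essentially the same route as the paper: part (a) is the handshake lemma, part (b) reads the $k$ neighbors of degree $i$ directly off the definition of $\mathrm{dp}(v)$ (distinct from $v$ since $G$ is simple), and part (c) is the same parity double-count over the even- and odd-degree vertex classes. The only cosmetic difference is in (c): the paper argues from the odd side, using that $\sum_{\deg a_j \text{ odd}} \mathrm{deg}(a_j)$ is even and that $\sum_j soc(\mathrm{dp}(a_j))$ counts each odd--odd edge twice, whereas you obtain the evenness of the cut $e(V_o,V_e)$ from the even-side identity $\sum_{v\in V_e}\deg(v)=2e(V_e)+e(V_o,V_e)$ --- mirror images of the same double count, with yours avoiding the appeal to the handshake lemma for the odd-degree sum.
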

\begin{proof}
(a) Let $f_{i}=\rm{dp}(v_{i})$, for $1\leq i\leq n$. We have $\sum^{n}_{i=1}sc(f_{i})=\sum^{n}_{i=1}\deg(v_{i})$. Thus $\sum^{n}_{i=1}sc(f_{i})$ is even.

(b) Let $k(\neq0)$ be the coefficient of $kx^{i}$ in $\rm{dp}(v)$. Therefore $v$ has exactly $k$ neighbor $v_{1}, \ldots , v_{k}$ of degree $i$. Now, $v_{1}, \ldots, v_{k}$ are distinct from $v$, and
\[sc(\rm{dp}(v_{1}))= \ldots =sc(\rm{dp}(v_{k}))=i.\]

(c) Let $\{a_{1}, \ldots , a_{s}\}$ be the set of odd vertices of $G$. $\sum^{n}_{j=1}\deg(a_{j})$ is even; That is $\sum^{s}_{j=1}sc(\rm{dp}(a_{j}))$ is even. Thus $\sum^{s}_{j=1}sec(\rm{dp}(a_{j}))+\sum^{s}_{j=1}soc(\rm{dp}(a_{j}))$ is even. For each $a_{j}$, $1\leq j\leq n$, if $a_{j^{\prime}}$ be an odd neighbor of $a_{j}$, then $a_{j}$ is an odd neighbor of $a_{j^{\prime}}$, as well. Therefore the edge between $a_{j}$ and $a_{j^{\prime}}$, acts two times in calculating $\sum^{s}_{j=1}soc(\rm{dp}(a_{j}))$, once in $soc(\rm{dp}(a_{j}))$ and again in $soc(\rm{dp}(aj^{\prime}))$. Thus $\sum^{s}_{j=1}soc(\rm{dp}(a_{j}))$ is even and thus $\sum^{s}_{j=1}sec(\rm{dp}(a_{j}))$, that is $\sum_{sc(f_{j}) \ is \ odd} sec(f_{j})$, is an even integer.

The argument for second part, is similar, with this difference that we should start with the set of all even vertices, $\{b_{1}, \ldots , b_{t}\}$.
\end{proof}
\begin{example}
The sequence
\[s_{1}=(2x, x^{2}, x, x, x)\]
of polynomials, satisfies (a) and (b), but not (c); The sequence
\[s_{2}=(2x, x^{2}, x^{2}, x, x, x)\]
satisfies (b) and (c), but not (a); Finally the sequence
\[s_{3}=(2x^{2}, x, x, x, x)\]
satisfies (a) and (c), but not (b). Therefore upon Theorem 3.10, the sequences $s_{1}, s_{2}$ and $s_{3}$ is not realizable. Meanwhile it is possible that a non-increasing sequence $q=(f_{1}, f_{2}, \ldots , f_{n})$ of nonzero polynomials with coefficients in nonnegative integers, satisfies (a), (b) and (c), but it do not be realizable yet. Consider for example, the sequence
\[2x^{2}, 2x, 2x, x, x.\]
Note that if the sequence be realizable, then the vertex $v$ which $\rm{dp}(v)=2x$, should be adjacent only with two vertices with degree polynomials $x$ and $x$ (name a and b). But in this case, the degree polynomial of a and b will not be $x$.
\end{example}
Now we study the behavior of degree polynomial under graph operations
\begin{thm}
Let $G$ and $H$ be two simple graphs with disjoint vertex sets, and $u$ be a vertex in $G$. Then
\[\rm{dp}_{G\vee H}(u)=x^{n_{2}}\rm{dp}_{G}(u)+x^{n_{1}}\rm{dp}(H),\]
where $n_{1}$ and $n_{2}$ are the orders of $G$ and $H$, respectively.
\end{thm}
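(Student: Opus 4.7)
The plan is to work directly from the definition of $\mathrm{dp}_{G\vee H}(u)$ by partitioning the neighbor set of $u$ in $G\vee H$ and tracking how the degrees of those neighbors change when we pass from the individual graphs to the join.

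First I would observe that from the definition of the join, the neighbors of $u$ in $G\vee H$ split into two disjoint classes: (i) the neighbors of $u$ in $G$ itself (from clause (1) of the join's adjacency definition), and (ii) every vertex of $H$ (from clause (3)). I would then record the key degree-shift facts: for any $w\in V(G)$, $\deg_{G\vee H}(w)=\deg_G(w)+n_2$ because $w$ acquires $n_2$ new neighbors (the entire $V(H)$), and symmetrically for any $w\in V(H)$, $\deg_{G\vee H}(w)=\deg_H(w)+n_1$.

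Next I would translate these two degree shifts into the polynomial language. Writing $\mathrm{dp}_G(u)=\sum_i a_i x^i$, the $a_i$ neighbors of $u$ inside $G$ having $G$-degree $i$ each contribute $x^{i+n_2}$ to $\mathrm{dp}_{G\vee H}(u)$, giving a total contribution of $\sum_i a_i x^{i+n_2}=x^{n_2}\mathrm{dp}_G(u)$. Writing $\mathrm{dp}(H)=\sum_j t_j x^j$, the $t_j$ vertices of $H$ with $H$-degree $j$ are all neighbors of $u$ in $G\vee H$ and each contributes $x^{j+n_1}$, for a total contribution of $\sum_j t_j x^{j+n_1}=x^{n_1}\mathrm{dp}(H)$. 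Adding the two disjoint contributions yields the claimed formula.

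There is essentially no hard step; the only thing to be careful about is the boundary cases, and I would mention them briefly. If $u$ is isolated in $G$ then $\mathrm{dp}_G(u)=0$ by the convention in Definition~3.2, and the first summand vanishes correctly; if $H$ has isolated vertices they contribute terms $t_0 x^{n_1}$ to $x^{n_1}\mathrm{dp}(H)$, matching the fact that in $G\vee H$ these vertices have degree exactly $n_1$ and are still neighbors of $u$. Thus the bookkeeping of ``new'' degrees versus ``old'' degrees is the whole content of the argument.
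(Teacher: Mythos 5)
Your proposal is correct and follows essentially the same route as the paper's proof: partition the neighbors of $u$ in $G\vee H$ into the old $G$-neighbors (whose degrees shift by $n_{2}$) and all of $V(H)$ (whose degrees shift by $n_{1}$), then read off the two polynomial contributions; the paper merely writes this out in more detail and treats the case of $u$ isolated in $G$ separately, which you handle with the convention $\mathrm{dp}_{G}(u)=0$.
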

\begin{proof}
If $u$ be an isolated vertex in $G$, then $\rm{dp}_{G}(u)=0$. In this case, since $u$ is adjacent with all vertices of $H$ in $G\vee H$ (by definition of $G\vee H$) and $u$ is not adjacent with any vertex of $G$, if $H$ has $t_{0}$ vertices of degree 0, $t_{1}$ vertices of degree 1, ... , $t_{\Delta}$ vertices of degree $\Delta$ ($\Delta$ is the maximum degree of $H$), then the neighbors of $u$ in $G\vee H$, are restricted to:\\

\ \ \ \ \ \ \ \ \ \ \ \ \ \ \ \ \ \ \ \ \ \ \ \ \ $t_{0}$ vertices\ of\ degree $n_{1}+0,$\\

\ \ \ \ \ \ \ \ \ \ \ \ \ \ \ \ \ \ \ \ \ \ \ \ \ $t_{1}$ vertices\ of\ degree $n_{1}+1,$ 
\vskip -0.45 true cm
\ \ \ \[.\]
\vskip -0.7 true cm
\ \ \ \[.\]
\vskip -0.7 true cm
\ \ \ \[.\] \ \ \ \ \ \ 
\vskip -0.42 true cm
\ \ \ \ \ \ \ \ \ \ \ \ \ \ \ \ \ \ \ \ \ \ \ \ \ $t_{\Delta}$ vertices\ of\ degree $n_{1}+\Delta,$\\

\hskip -0.4 true cm and therefore the degree polynomial of $u$ in $G\vee H$ is:
\[t_{0} x^{n_{1}}+ t_{1} x^{n_{1}+1}+ \ldots+ t_{\Delta} x^{n_{1}+\Delta}= x^{n_{1}} \rm{dp}(H),\]
and therefore the conclusion holds.

Now let $u$ is not isolated vertex in $G$. Suppose that $\rm{dp}_{G}(u)=\sum_{s=1}^{k}c_{i_{s}}x^{i_{s}}$ where $c_{i_{s}}$'s are positive integers and $i_{s}$'s are the distinct degrees of neighbors of $u$ is $G$. It means that the neighbors of $u$ in $G$, are restricted to:
\vskip +0.45 true cm
\ \ \ \ \ \ \ \ \ \ \ \ \ \ \ \ \ \ \ \ \ \ \ \ \ \ \ \ $c_{i_{1}}$ vertices\ of\ degree $i_{1},$ 
\vskip -0.45 true cm
\ \ \ \ \ \ \[.\]
\vskip -0.7 true cm
\ \ \ \ \ \ \[.\]
\vskip -0.7 true cm
\ \ \ \ \ \ \[.\] \ \ \ \ \ \ 
\vskip -0.35 true cm
\ \ \ \ \ \ \ \ \ \ \ \ \ \ \ \ \ \ \ \ \ \ \ \ \ \ \ \ $c_{i_{k}}$ vertices\ of\ degree $i_{k},$\\

\hskip -0.4 true cm Now by definition of $G\vee H$, $u$ will be adjacent in $G\vee H$, with all of the above vertices, and also with any vertex in $H$. Thus the neighbors of $u$ in $G\vee H$ are restricted to:
\vskip +0.45 true cm
\ \ \ \ \ \ \ \ \ \ \ \ \ \ \ \ \ \ \ \ \ \ \ \ \ \ $c_{i_{1}}$ vertices\ of\ degree $n_{2}+i_{1},$
\vskip -0.45 true cm
\ \ \ \[.\]
\vskip -0.7 true cm
\ \ \ \[.\]
\vskip -0.7 true cm
\ \ \ \[.\]
\vskip 0.1 true cm
 \ \ \ \ \ \ \ \ \ \ \ \ \ \ \ \ \ \ \ \ \ \ \ \ \ \ $c_{i_{k}}$ vertices\ of\ degree $n_{2}+i_{k},$\\ 

\ \ \ \ \ \ \ \ \ \ \ \ \ \ \ \ \ \ \ \ \ \ \ \ \ \ $t_{0}$ vertices\ of\ degree $n_{1}+0,$\\

\ \ \ \ \ \ \ \ \ \ \ \ \ \ \ \ \ \ \ \ \ \ \  \ \ \ $t_{1}$ vertices\ of\ degree $n_{1}+1,$  
\vskip -0.45 true cm
\ \ \ \[.\]
\vskip -0.7 true cm
\ \ \ \[.\]
\vskip -0.7 true cm
\ \ \ \[.\] \ \ \ \ \ \ 
\vskip -0.42 true cm
\ \ \ \ \ \ \ \ \ \ \ \ \ \ \ \ \ \ \ \ \ \ \  \ \ \ $t_{\Delta}$ vertices\ of\ degree $n_{1}+\Delta,$\\
\\
where $\rm{dp}(H)=\sum_{i=0}^{\Delta}t_{i}x^{i}$. Therefore
\[\rm{dp}(G\vee H)=c_{i_{1}}x^{n_{2}+i_{1}}+ \ldots+c_{i_{k}} x^{n_{2}+i_{k}}+t_{0}x^{n_{1}}+t_{1}x^{n_{1}+1}+ \ldots+ t_{\Delta}x^{n_{1}+\Delta}=\]
 $x^{n_{2}}\rm{dp}_{G}(u)+x^{n_{1}}\rm{dp}(H)$.
\end{proof}
\begin{rem}
Since for every two simple graphs $G$ and $H$, $G\vee H=H\vee G$, the above theorem, in practice, provides a tool for calculating the degree polynomial of any vertex of $G\vee H$.
\end{rem}
\begin{thm}
If $G$ and $H$ be two simple graphs, and $u$ and $v$ be vertices of $G$ and $H$, respectively, then
\[\rm{dp}_{G\times H}((u,v))= x^{\deg u}\rm{dp}(v)+x^{\deg v} \rm{dp}(u).\]
\end{thm}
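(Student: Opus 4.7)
The plan is to unwind the definition of the Cartesian product, partition the neighbors of $(u,v)$ in $G\times H$ into two classes according to which coordinate is fixed, compute the $G\times H$-degree of each such neighbor, and then collect terms to read off the degree polynomial.

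First, by the defining adjacency rule for $G\times H$, the neighbor set of $(u,v)$ splits as the disjoint union of the two families
\[
N_{1}=\{(u',v)\mid u'\sim u\text{ in }G\}\quad\text{and}\quad N_{2}=\{(u,v')\mid v'\sim v\text{ in }H\},
\]
disjointness being immediate from whether the first coordinate equals $u$. Next, for an arbitrary vertex $(a,b)\in V(G\times H)$ the same rule partitions the neighbors of $(a,b)$ into a ``horizontal'' family of size $\deg_{G}(a)$ and a ``vertical'' family of size $\deg_{H}(b)$, so that $\deg_{G\times H}(a,b)=\deg_{G}(a)+\deg_{H}(b)$. Applying this formula to the vertices in $N_{1}$ and $N_{2}$ gives
\[
\deg_{G\times H}(u',v)=\deg_{G}(u')+\deg v,\qquad \deg_{G\times H}(u,v')=\deg u+\deg_{H}(v').
\]

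Second, I would write $\rm{dp}(u)=\sum_{i}c_{i}x^{i}$, where $c_{i}$ counts the neighbors of $u$ in $G$ of degree $i$, and similarly $\rm{dp}(v)=\sum_{j}d_{j}x^{j}$, and then compute the coefficient of each $x^{k}$ in $\rm{dp}_{G\times H}((u,v))$. By the degree identities above, the contribution from $N_{1}$ is $c_{k-\deg v}$ and the contribution from $N_{2}$ is $d_{k-\deg u}$; summing over $k$ yields
\[
\rm{dp}_{G\times H}((u,v))=\sum_{i}c_{i}x^{i+\deg v}+\sum_{j}d_{j}x^{j+\deg u}=x^{\deg v}\rm{dp}(u)+x^{\deg u}\rm{dp}(v),
\]
which is the claimed identity.

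The calculation is essentially routine; the two points that warrant real attention are verifying the degree formula $\deg_{G\times H}(a,b)=\deg_{G}(a)+\deg_{H}(b)$, which fixes the exponent shifts on the right-hand side, and confirming that $N_{1}\cap N_{2}=\emptyset$, so that no neighbor is counted twice. Degenerate cases in which $u$ or $v$ is isolated are absorbed automatically, since then the corresponding $\rm{dp}$ is zero and one of $N_{1},N_{2}$ is empty.
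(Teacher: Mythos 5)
Your proposal is correct and follows essentially the same route as the paper's proof: splitting the neighbors of $(u,v)$ into those of the form $(a,v)$ with $a\sim u$ and those of the form $(u,b)$ with $b\sim v$, using $\deg_{G\times H}(a,b)=\deg_{G}(a)+\deg_{H}(b)$ to shift exponents, and collecting terms. The only difference is presentational: the paper treats the isolated-vertex cases as separate explicit cases, whereas you absorb them into the general computation, which is fine.
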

\begin{proof}
If $u$ in $G$ and $v$ in $H$ be isolated, then by definition of $G\times H, \ (u,v)$ in $G\times H$ is an isolated vertex and therefore $\rm{dp}((u,v))=0$. On the other hand, $\rm{dp}(u)=0$ and $\rm{dp}(v)=0$. Therefore the conclusion holds.

If $u$ be isolated in $G$ but $v$ dose not be isolated in $H$, supposing $\rm{dp}(v)=\sum_{r_{j}\neq 0}r_{j}x^{j}$ where $r_{j}$'s are positive integers and $j$'s are the disjoint degrees of neighbors of $v$ in $H$, by definition of $G\times H$, since $u$ has not any adjacent in $G$, each neighbor of $(u,v)$ in $G\times H$, is in the form $(u,v^{\prime})$ where $v^{\prime}\sim v$. Meanwhile the degree of such $(u,v^{\prime})$ in $G\times H$, is $\deg u+\deg v^{\prime}$. Since for each $j$, $v$ has $r_{j}$ neighbors of degree $j$, the number of neighbors of $(u,v)$ of degree $\deg u+ j$, will be $r_{j}$. Thus
\[\rm{dp}_{G\times H}((u,v))=\sum_{r_{j}}r_{j}x^{\deg u+ j}= x^{\deg u}\rm{dp}(v).\]
But in this case, $\rm{dp}(u)=0$ and therefore the conclusion holds.

The argument in the case that $v$ is isolated but $u$ is not, is similar to the argument in the recent case.

Now let no one of $u$ and $v$ is not isolated. suppose that $\rm{dp}(u)=\sum_{s=1}^{k}c_{i_{s}}x^{i_{s}}$, and $\rm{dp}(v)=\sum_{t=1}^{k^{\prime}}r_{j_{t}}x^{j_{t}}$, where $c_{i_{s}}$'s and $r_{j_{t}}$'s are positive integers, and $i_{s}$'s and $j_{t}$'s are the disjoint degrees of neighbors of $u$ and $v$, respectively. This means that the neighbors of $u$ in $G$, are restricted to:
\vskip +0.45 true cm
\ \ \ \ \ \ \ \ \ \ \ \ \ \ \ \ \ \ \ \ \ \ \ \ \ \ \ \ $c_{i_{1}}$ vertices\ of\ degree $i_{1},$ 
\vskip -0.45 true cm
\ \ \ \ \ \ \[.\]
\vskip -0.7 true cm
\ \ \ \ \ \ \[.\]
\vskip -0.7 true cm
\ \ \ \ \ \ \[.\] \ \ \ \ \ \ 
\vskip -0.35 true cm
\ \ \ \ \ \ \ \ \ \ \ \ \ \ \ \ \ \ \ \ \ \ \ \ \ \ \ \ $c_{i_{k}}$ vertices\ of\ degree $i_{k},$\\

\hskip -0.4 cm and the neighbors of $v$ in $H$, are restricted to:
\vskip +0.45 true cm
\ \ \ \ \ \ \ \ \ \ \ \ \ \ \ \ \ \ \ \ \ \ \ \ \ \ \ \ $r_{j_{1}}$ vertices\ of\ degree $j_{1},$ 
\vskip -0.45 true cm
\ \ \ \ \ \ \[.\]
\vskip -0.7 true cm
\ \ \ \ \ \ \[.\]
\vskip -0.7 true cm
\ \ \ \ \ \ \[.\] \ \ \ \ \ \ 
\vskip -0.35 true cm
\ \ \ \ \ \ \ \ \ \ \ \ \ \ \ \ \ \ \ \ \ \ \ \ \ \ \ \ $r_{j_{k^{\prime}}}$ vertices\ of\ degree $j_{k^{\prime}}.$\\

\hskip -0.4 cm By definition of $G\times H$, the adjacent vertices of $(u,v)$ in $G\times H$, are of two kinds below:\\
(i) the vertices in the form $(u,b)$ where $b$ is an adjacent of $v$ in $H$,\\
(ii) the vertices in the form $(a,v)$ where $a$ is an adjacent of $u$ in $G$.\\
Since in all vertices of kind (i), $u$ is fixed, such vertices are restricted to:
\vskip +0.45 true cm
\ \ \ \ \ \ \ \ \ \ \ \ \ \ \ \ \ \ \ \ \ \ \ $r_{j_{1}}$ vertices\ of\ degree $\deg u+j_{1},$ 
\vskip -0.45 true cm
\ \ \ \ \ \ \[.\]
\vskip -0.7 true cm
\ \ \ \ \ \ \[.\]
\vskip -0.7 true cm
\ \ \ \ \ \ \[.\] \ \ \ \ \ \ 
\vskip -0.35 true cm
\ \ \ \ \ \ \ \ \ \ \ \ \ \ \ \ \ \ \ \ \ \ \ $r_{j_{k^{\prime}}}$ vertices\ of\ degree $\deg u+j_{k^{\prime}}.$\\

\hskip -0.4 cm Also, since in the vertices of kind (ii), $v$ is fixed, such vertices are restricted to:
\vskip +0.45 true cm
\ \ \ \ \ \ \ \ \ \ \ \ \ \ \ \ \ \ \ \ \ \ \ $c_{i_{1}}$ vertices\ of\ degree $i_{1}+\deg v,$ 
\vskip -0.45 true cm
\ \ \ \ \ \ \[.\]
\vskip -0.7 true cm
\ \ \ \ \ \ \[.\]
\vskip -0.7 true cm
\ \ \ \ \ \ \[.\] \ \ \ \ \ \ 
\vskip -0.35 true cm
\ \ \ \ \ \ \ \ \ \ \ \ \ \ \ \ \ \ \ \ \ \ \ $c_{i_{k}}$ vertices\ of\ degree $i_{k}+\deg v.$\\

\hskip -0.4 cm Not that the degree of each vertex, $(x,y)$, in $G\times H$ is $\deg_{G}x+\deg_{H}y$. Therefore
\[\rm{dp}_{G\times H}((u,v))=(r_{j_{1}}x^{\deg u+j_{1}}+ \ldots+ r_{j_{k^{\prime}}} x^{\deg u+ j_{k^{\prime}}})\]
\[+\ (c_{i_{1}}x^{i_{1}+\deg v}+ \ldots+ c_{i_{k}} x^{i_{k}+\deg v})=\ x^{\deg u}\rm{dp}(v)\]
\hskip -0.05 cm $\ \ \ \ \ \ \ \ \ \ \ \ \ \ \ +\ x^{\deg v} \rm{dp}(u).$
\end{proof}
\begin{thm}
If $G$ and $H$ be two simple graphs, and $u$ and $v$ be vertices of $G$ and $H$, respectively, then
\[\rm{dp}_{G\otimes H}((u,v))= \rm{dp}(u)\otimes\rm{dp}(v)\]
\end{thm}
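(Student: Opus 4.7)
The plan is to translate the adjacency rule for the tensor product of graphs into an explicit enumeration of the neighbors of $(u,v)$, sort them by degree, and recognize the resulting polynomial as the polynomial tensor product from Definition 3.1. First I would dispose of the degenerate case in which $u$ is isolated in $G$ or $v$ is isolated in $H$. In either case, the defining adjacency rule for $G\otimes H$ requires simultaneous adjacency in both coordinates, so $(u,v)$ is isolated in $G\otimes H$ and $\rm{dp}_{G\otimes H}((u,v))=0$; on the other side, one of $\rm{dp}(u)$, $\rm{dp}(v)$ equals $0$ and the convention $0\otimes g = g\otimes 0 = 0$ from Definition 3.1 makes the two sides agree.

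For the main case, assume neither $u$ nor $v$ is isolated, and write
\[
\rm{dp}(u) = \sum_{s=1}^{k} c_{i_{s}} x^{i_{s}}, \qquad \rm{dp}(v) = \sum_{t=1}^{k'} r_{j_{t}} x^{j_{t}},
\]
where the coefficients $c_{i_{s}}$, $r_{j_{t}}$ are positive integers and the exponents $i_{s}$, $j_{t}$ range over the distinct degrees of neighbors of $u$ in $G$ and of $v$ in $H$, respectively. By the definition of $G\otimes H$, the neighbors of $(u,v)$ are precisely the pairs $(u',v')$ with $u'\sim u$ in $G$ and $v'\sim v$ in $H$, so they partition into $kk'$ blocks indexed by $(s,t)$, each of size $c_{i_{s}}\,r_{j_{t}}$.

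The crucial auxiliary fact I would record is that for every $(x,y)\in V(G\otimes H)$,
\[
\deg_{G\otimes H}(x,y) = \deg_{G}(x)\cdot \deg_{H}(y),
\]
which follows directly from the adjacency rule by counting neighbors of $(x,y)$ as pairs of neighbors in each factor. Consequently every vertex in the $(s,t)$-block has degree $i_{s}\cdot j_{t}$ in $G\otimes H$, and hence
\[
\rm{dp}_{G\otimes H}((u,v)) = \sum_{s=1}^{k}\sum_{t=1}^{k'} c_{i_{s}} r_{j_{t}}\, x^{i_{s}j_{t}}.
\]

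The final step is to identify this double sum with $\rm{dp}(u)\otimes \rm{dp}(v)$ from Definition 3.1. I expect the only nonroutine piece of bookkeeping to be the coalescence of distinct pairs $(s,t)$ that produce the same product $i_{s}j_{t}=\ell$: they jointly contribute $\sum_{i_{s}j_{t}=\ell} c_{i_{s}} r_{j_{t}}$ to the coefficient of $x^{\ell}$, which is exactly the rule $c_{\ell}=\sum_{i\cdot j=\ell}a_{i}b_{j}$ defining the polynomial tensor product. With this identification the theorem follows.
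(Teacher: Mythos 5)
Your proposal is correct and follows essentially the same route as the paper: dispose of the isolated case via the convention $0\otimes f=f\otimes 0=0$, then enumerate the neighbors of $(u,v)$ as pairs of neighbors of $u$ and $v$, use $\deg_{G\otimes H}(u',v')=\deg_G(u')\cdot\deg_H(v')$, and match the resulting coefficient counts with Definition 3.1. Your explicit remark about coalescing distinct pairs $(s,t)$ with $i_{s}j_{t}=\ell$ is a slightly more careful bookkeeping of a point the paper treats implicitly, but the argument is the same.
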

\begin{proof}
If at least one of $u$ and $v$ be isolated, then by 
definition of $G\otimes H$, $(u,v)$ is an isolated vertex in the graph $G\otimes H$, and therefore $\rm{dp}((u,v))=0$. On the other hand, in this case, at least on of $\rm{dp}(u)$ and $\rm{dp}(v)$ is zero. Therefore by definition of tensor product of polynomials, $\rm{dp}(u)\otimes \rm{dp}(v)$ is zero as well, and the conclusion holds.

Now let no one of $u$ and $v$ be isolated. Suppose that $\rm{dp}(u)=\sum_{i}c_{i}x^{i}$ and $\rm{dp}(v)=\sum_{j}r_{j}x^{j}$, where $c_{i}$'s and $r_{j}$'s are positive integers, and $i$'s and $j$'s are the disjoint degrees of neighbors of $u$ and $v$, respectively. By definition of $G\otimes H$, first, each neighbor of $(u,v)$ in $G\otimes H$, is in the form $(u^{\prime},v^{\prime})$ where $u^{\prime}$ is a neighbor of $u$, and $v^{\prime}$ is a neighbor of $v$, and secondly if $u^{\prime}$ is a neighbor of $u$ of degree $i$ and $v^{\prime}$ is a neighbor of $v$ of degree $j$, then $(u^{\prime},v^{\prime})$ is a neighbor of $(u,v)$ of degree $i.j$. Since for each $i$, $u$ has exactly $c_{i}$ neighbors of degree $i$, and for each $j$, $v$ has exactly $r_{j}$ neighbors of degree $j$, the number $c_{i}r_{j}$ is calculated in the coefficient of $x^{i.j}$ in the degree polynomial of $(u,v)$. This implies that
\[\rm{dp}_{G\otimes H}((u,v))=\rm{dp}(u)\otimes \rm{dp}(v)\]
by definition of $\rm{dp}(u)\otimes \rm{dp}(v)$.
\end{proof}
\begin{thm}
If $G$ and $H$ be two simple graphs and $u$ and $v$ be vertices of $G$ and $H$, respectively, then
\[\rm{dp}_{G[H]}((u,v))=(\rm{dp}(u))^{\curlywedge\times n_{2}}\rm{dp}(H)+ x^{(\deg u)n_{2}}\rm{dp}(v),\]
in which $n_{2}$ is the order of $H$.
\end{thm}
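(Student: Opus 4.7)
The plan is to compute $\rm{dp}_{G[H]}((u,v))$ by direct enumeration of neighbors. First I would establish the auxiliary formula $\deg_{G[H]}(x,y) = n_{2}\deg_{G}(x) + \deg_{H}(y)$ valid for every vertex $(x,y)$ of $G[H]$: from the definition of the lexicographic product, the neighbors of $(x,y)$ split into the $n_{2}\deg_{G}(x)$ vertices of the form $(x',y')$ with $x' \sim x$ in $G$ and $y' \in V(H)$ arbitrary, together with the $\deg_{H}(y)$ vertices of the form $(x,y')$ with $y' \sim y$ in $H$; these two sets are disjoint, since in the first the first coordinate $x'$ differs from $x$ (as $G$ is simple) while in the second it equals $x$.

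I would then apply this classification to $(u,v)$ itself. Writing $\rm{dp}(u) = \sum_{s} c_{i_{s}} x^{i_{s}}$, $\rm{dp}(v) = \sum_{t} r_{j_{t}} x^{j_{t}}$ and $\rm{dp}(H) = \sum_{l} t_{l} x^{l}$, the first class of neighbors of $(u,v)$ consists of the pairs $(u',v')$ with $u' \sim u$ in $G$ and $v' \in V(H)$ arbitrary; such a pair has $G[H]$-degree $n_{2} i_{s} + l$ whenever $\deg_{G}(u') = i_{s}$ and $\deg_{H}(v') = l$, and there are exactly $c_{i_{s}} t_{l}$ choices for it. The total contribution of this class is therefore
\[
\sum_{s,l} c_{i_{s}} t_{l} x^{n_{2} i_{s} + l} = \Bigl(\sum_{s} c_{i_{s}} x^{n_{2} i_{s}}\Bigr) \rm{dp}(H) = (\rm{dp}(u))^{\curlywedge\times n_{2}} \rm{dp}(H),
\]
the last equality being precisely the definition of the operation $\curlywedge\times n_{2}$. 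The second class consists of the pairs $(u,v')$ with $v' \sim v$ in $H$; such a pair has $G[H]$-degree $n_{2} \deg(u) + \deg_{H}(v')$, and grouping by $j_{t}$ yields the contribution
\[
\sum_{t} r_{j_{t}} x^{n_{2} \deg(u) + j_{t}} = x^{(\deg u) n_{2}} \rm{dp}(v).
\]
Adding the two disjoint contributions will give the asserted identity.

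Finally I would handle the degenerate cases with the conventions already fixed in the paper: if $u$ is isolated then the first class is empty, $\deg u = 0$ and $\rm{dp}(u) = 0$, so the formula correctly collapses to $\rm{dp}(v)$; if $v$ is isolated then the second class is empty and $\rm{dp}(v) = 0$, so only the first summand survives (consistent with $0^{\curlywedge\times n_{2}} = 0$ when both are isolated). The only step that requires genuine care is the counting for the first class: each neighbor $u'$ of $u$ gives rise to $n_{2}$ descendants $(u', v')$ in $G[H]$ whose degrees are \emph{not} uniform but depend on $\deg_{H}(v')$, and it is precisely this spread over the degree spectrum of $H$ that produces the factor $\rm{dp}(H)$ rather than a flat $n_{2}$ in the first summand.
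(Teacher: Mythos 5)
Your proposal is correct and follows essentially the same route as the paper: split the neighbors of $(u,v)$ into those of the form $(a,b)$ with $a\sim u$ in $G$ and $b\in V(H)$ arbitrary, and those of the form $(u,b)$ with $b\sim v$ in $H$, use $\deg_{G[H]}(x,y)=n_{2}\deg_{G}(x)+\deg_{H}(y)$, and sum the two contributions, checking the isolated-vertex cases via the conventions $\rm{dp}=0$ and $0^{\curlywedge\times n_{2}}=0$. The only difference is organizational: you treat the general case first and let the degenerate cases collapse, whereas the paper runs the same count separately in each case.
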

\begin{proof}
If $u$ and $v$, both be isolated, by definition of $G[H]$, $(u,v)$ is isolated in $G[H]$, and therefore $\rm{dp}_{G[H]}((u,v))=0$. But in this case, both $\rm{dp}(u)$ and $\rm{dp}(v)$ are zero polynomials, and therefore the conclusion holds.

If $u$ is isolated in $G$, but $v$ is not isolated in $H$, supposing $\rm{dp}(v)=\sum_{t=1}^{k^{\prime}} r_{j_{t}}x^{j_{t}}$, in which $r_{j_{t}}$'s are positive integers and $j_{t}$'s are the disjoint degrees of neighbors of $v$ in $H$, the neighbors of $v$ in $H$, are restricted to:
\vskip +0.45 true cm
\ \ \ \ \ \ \ \ \ \ \ \ \ \ \ \ \ \ \ \ \ \ \ \ \ \ \ $r_{j_{1}}$ vertices\ of\ degree $j_{1},$ 
\vskip -0.45 true cm
\ \ \ \ \ \ \[.\]
\vskip -0.7 true cm
\ \ \ \ \ \ \[.\]
\vskip -0.7 true cm
\ \ \ \ \ \ \[.\] \ \ \ \ \ \ 
\vskip -0.35 true cm
\ \ \ \ \ \ \ \ \ \ \ \ \ \ \ \ \ \ \ \ \ \ \ \ \ \ $r_{j_{k^{\prime}}}$ vertices\ of\ degree $j_{k^{\prime}}.$\\

\hskip -0.4 cm Since $u$ has no any neighbor in $G$, by definition of $G[H]$, every neighbors of $(u,v)$ is in the form $(u,b)$ with degree $(\deg u)n_{2}+\deg b$, where $b$ is a neighbor of $v$ in $H$. Hence since $u$ is fixed, the neighbors of $(u,v)$ are restricted to:
\vskip +0.45 true cm
\ \ \ \ \ \ \ \ \ \ \ \ \ \ \ \ \ \ \ \ \ $r_{j_{1}}$ vertices\ of\ degree $(\deg u)n_{2}+j_{1},$ 
\vskip -0.45 true cm
\ \ \ \ \ \ \[.\]
\vskip -0.7 true cm
\ \ \ \ \ \ \[.\]
\vskip -0.7 true cm
\ \ \ \ \ \ \[.\] \ \ \ \ \ \ 
\vskip -0.35 true cm
\ \ \ \ \ \ \ \ \ \ \ \ \ \ \ \ \ \ \ \ $r_{j_{k^{\prime}}}$ vertices\ of\ degree $(\deg u)n_{2}+j_{k^{\prime}}.$\\

\hskip -0.4 cm Thus
\[\rm{dp}_{G[H]}((u,v))=r_{j_{1}}x^{(\deg u)n_{2}+j_{1}}+\ldots+r_{j_{k^{\prime}}}x^{(\deg u)n_{2}+j_{k^{\prime}}}=x^{(\deg u)n_{2}}\rm{dp}(v),\]
and since $\rm{dp}(u)=0$, the conclusion in this case holds.
         
If $v$ is isolated in $H$, but $u$ is not isolated in $G$, since $v$ has not any neighbor in $H$, each    neighbor of $(u,v)$ is in the form $(a,b)$ such that $a$ is a neighbor of $u$, and $b$ is a vertex of $H$, and the degree of $(a,b)$ in $G[H]$ is $(\deg a)n_{2}+\deg b$. Suppose that $\rm{dp}(u)=\sum_{s=1}^{k}c_{i_{s}} x^{i_{s}}$ where $c_{i_{s}}$'s are positive integers and $i_{s}$'s are the distinct degrees of neighbors of $u$ in $G$. Supposing $\rm{dp}(H)=\sum_{p=0}^{\Delta} l_{p}x^{p}$ where $l_{p}$'s are the number of vertices of $H$, each one of degree $p$, and $\Delta$ is the maximum degree of $H$, for each $p$, the neighbors of $(u,v)$ whose second components are of degree $p$, are restricted to:
\vskip +0.45 true cm
\ \ \ \ \ \ \ \ \ \ \ \ \ \ \ \ \ \ \ \ \ \ \ \ $l_{p}c_{i_{1}}$ vertices\ of\ degree $i_{1}n_{2}+p,$ 
\vskip -0.45 true cm
\ \ \ \ \ \ \[.\]
\vskip -0.7 true cm
\ \ \ \ \ \ \[.\]
\vskip -0.7 true cm
\ \ \ \ \ \ \[.\] \ \ \ \ \ \ 
\vskip -0.35 true cm
\ \ \ \ \ \ \ \ \ \ \ \ \ \ \ \ \ \ \ \ \ \ \ \ $l_{p}c_{i_{k}}$ vertices\ of\ degree $ i_{k}n_{2}+p.$\\

\hskip -0.4 cm Therefore each $l_{p}c_{i_{s}}$ is calculated in the coefficient of $x^{i_{s}n_{2}+p}$ in $\rm{dp}((u,v))$. Thus
\[\rm{dp}_{G[H]}((u,v))=\sum_{p=0}^{\Delta}\sum_{s=1}^{k} l_{p}c_{i_{s}}x^{i_{s}n_{2}+p}=
\sum_{s=1}^{k}c_{i_{s}}x^{i_{s}n_{2}}\sum_{p=0}^{\Delta}l_{p}x^{p}\]
\[\hskip -2.8 cm =\sum_{s=1}^{k}c_{i_{s}}x^{i_{s}n_{2}}\rm{dp}(H)=
(\rm{dp}(u))^{\curlywedge n_{2}}\rm{dp}(H),\]
and since $\rm{dp}(v)=0$, the conclusion holds.

Now let no one of $u$ and $v$ be isolated. Suppose that $\rm{dp}(u)=\sum_{s=1}^{k}x_{i_{s}}x^{i_{s}}$ and $\rm{dp}(v)=\sum_{t=1}^{k^{\prime}}r_{j_{t}}x^{j_{t}}$ where $c_{i_{s}}$'s and $r_{j_{t}}$'s are positive integers and $i_{s}$'s and $j_{s}$'s are the distinct degree of neighbors of $u$ and $v$, respectively. The adjacent vertices of $(u,v)$ in $G[H]$ are of two kinds below:\\
(i) the vertices in the form $(a,b)$ where $a$ is a neighbor of $u$ in $G$, and $b$ is a vertex of $H$,\\
(ii) the vertices in the form $(u,b)$ where $b$ is a neighbor of $v$ in $H$.\\
Let $\rm{dp}(H)=\sum_{p=0}^{\Delta}l_{p}x^{p}$ where $\Delta$ be the maximum degree of $H$. For each $p$, the neighbors of $(u,v)$ of kind (i) whose second components are of degree $p$, are restricted to:
\vskip +0.45 true cm
\ \ \ \ \ \ \ \ \ \ \ \ \ \ \ \ \ \ \ \ \ \ \ \ $l_{p}c_{i_{1}}$ vertices\ of\ degree $i_{1}n_{2}+p,$ 
\vskip -0.45 true cm
\ \ \ \ \ \ \[.\]
\vskip -0.7 true cm
\ \ \ \ \ \ \[.\]
\vskip -0.7 true cm
\ \ \ \ \ \ \[.\] \ \ \ \ \ \ 
\vskip -0.35 true cm
\ \ \ \ \ \ \ \ \ \ \ \ \ \ \ \ \ \ \ \ \ \ \ \ $l_{p}c_{i_{k}}$ vertices\ of\ degree $ i_{k}n_{2}+p.$\\

\hskip -0.4 cm Therefore each $l_{p}c_{i_{s}}$ is calculated in the coefficient of $x^{i_{s}n_{2}+p}$. On the other hand, since in all neighbors of kind (ii), $u$ is fixed, such vertices are restricted to:
\vskip +0.45 true cm
\ \ \ \ \ \ \ \ \ \ \ \ \ \ \ \ \ \ \ \ \ $r_{j_{1}}$ vertices\ of\ degree $(\deg u)n_{2}+j_{1},$ 
\vskip -0.45 true cm
\ \ \ \ \ \ \[.\]
\vskip -0.7 true cm
\ \ \ \ \ \ \[.\]
\vskip -0.7 true cm
\ \ \ \ \ \ \[.\] \ \ \ \ \ \ 
\vskip -0.35 true cm
\ \ \ \ \ \ \ \ \ \ \ \ \ \ \ \ \ \ \ \ $r_{j_{k^{\prime}}}$ vertices\ of\ degree $(\deg u)n_{2}+j_{k^{\prime}}.$\\

\hskip -0.4 cm Thus 
\[\rm{dp}_{G[H]}((u,v))=\sum_{p=0}^{\Delta}\sum_{s=1}^{k} l_{p}c_{i_{s}} x^{i_{s}n_{2}+p}+\sum_{t=0}^{k^\prime}r_{j_{t}}x^{(\deg u)n_{2}+j_{t}}\]
\[\hskip -3 cm =(\rm{dp}(u))^{\curlywedge n_{2}}\rm{dp}(H)+x^{(\deg u)n_{2}}\rm{dp}(v).\]
\end{proof}
As we saw above, having the degree polynomial sequence of graphs $G$ and $H$ without access to $G$ and $H$, the degree polynomial sequences of $G\vee H,\ G\times H,\ G\otimes H$, and $G[H]$ are calculatable. The following theorem shows that the degree polynomial sequence of the complement of a graph can be calculated having the degree polynomial sequence of that graph without access to the graph itself.
\begin{thm}
Let $G$ be a simple graph and $u$ be a vertex of $G$. Then
\[\rm{dp}_{G^{c}}(u)=(\rm{dp}(G)-\rm{dp}_{G}(u)- x^{\deg_ {u}G})^{\curlywedge (n-1)-}\]
where $n$ is the order of $G$.
\end{thm}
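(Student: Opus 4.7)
The plan is a direct counting argument, comparing coefficient by coefficient. First I would record two basic facts about $G^c$: for every vertex $w\in V(G)$, $\deg_{G^c}(w)=(n-1)-\deg_G(w)$, and the neighbors of $u$ in $G^c$ are exactly the vertices of $V(G)\setminus(\{u\}\cup N_G(u))$. Consequently, the coefficient of $x^i$ in $\rm{dp}_{G^c}(u)$ equals the number of $w\in V(G)\setminus(\{u\}\cup N_G(u))$ with $\deg_G(w)=n-1-i$.

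Next I would interpret the right-hand side combinatorially before applying the reflection. The polynomial $\rm{dp}(G)$ tallies every vertex of $G$ indexed by its $G$-degree; the polynomial $\rm{dp}_G(u)$ tallies the vertices of $N_G(u)$ indexed by their $G$-degree; and the monomial $x^{\deg_G u}$ accounts for $u$ itself. Since $u\notin N_G(u)$ and each neighbor of $u$ is counted once in $\rm{dp}_G(u)$, the difference $\rm{dp}(G)-\rm{dp}_G(u)-x^{\deg_G u}$ has nonnegative coefficients, and its coefficient of $x^j$ is precisely the number of vertices $w\neq u$ with $w\not\sim u$ in $G$ and $\deg_G(w)=j$.

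Finally, the operation ${}^{\curlywedge(n-1)-}$ sends each essential term $a_j x^j$ to $a_j x^{n-1-j}$; this is well defined because every nonzero exponent $j$ that appears is realized as a degree in $G$, hence satisfies $j\le n-1$. Applying it, the coefficient of $x^i$ on the right-hand side equals the coefficient of $x^{n-1-i}$ in $\rm{dp}(G)-\rm{dp}_G(u)-x^{\deg_G u}$, namely the number of $w\neq u$ with $w\not\sim u$ in $G$ and $\deg_G(w)=n-1-i$. This matches the left-hand side term by term, establishing the identity.

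The only delicate point, and the place one must argue rather than just compute, is the degenerate case in which $u$ is adjacent in $G$ to every other vertex, so that $u$ is isolated in $G^c$. Here the left-hand side is the zero polynomial, while on the right every vertex of $G$ is accounted for by $u$ itself or by a neighbor of $u$, so $\rm{dp}(G)-\rm{dp}_G(u)-x^{\deg_G u}=0$ and the conclusion follows from the stated convention $0^{\curlywedge(n-1)-}=0$. Beyond this edge case, the main obstacle is purely notational, namely keeping straight that ${}^{\curlywedge(n-1)-}$ reflects exponents and preserves coefficients, rather than performing a substitution.
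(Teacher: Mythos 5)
Your proof is correct and follows essentially the same route as the paper: a coefficient-by-coefficient count showing that the coefficient of $x^{j}$ in $\rm{dp}(G)-\rm{dp}_{G}(u)-x^{\deg_{G}u}$ counts the non-neighbors of $u$ (other than $u$) of $G$-degree $j$, which the reflection ${}^{\curlywedge(n-1)-}$ converts into the neighbor count of $u$ in $G^{c}$ by degree. Your added remarks on well-definedness of the reflection and on the degenerate case where $u$ is isolated in $G^{c}$ are sound refinements the paper leaves implicit.
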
 
\begin{proof}
For each $i(\in\mathbb{Z})\geq 0$, the coefficient of $x^{i}$ in $\rm{dp}(G)$, is the total number of the vertices of $G$, each one of degree $i$, and the coefficient of the same $x^{i}$ in $\rm{dp}(u)$, is exactly the number of the vertices of $G$, each one of degree $i$, which are adjacent to $u$ in $G$. Therefore the coefficient of each $x^{i}$ in the polynomial:
\[\rm{dp}(G)-\rm{dp}_{G}(u)\]
is the number of the vertices of degree $i$ (in $G$) which are non-adjacent to $u$. Since $u$ itself is non-adjacent to $u$, the coefficient of $x^{i}$ in the polynomial:
\[\rm{dp}(G)-\rm{dp}_{G}(u)-x^{\deg _{G}u}\]
is exactly the number of the vertices of $G$, other than $u$, which are of degree $i$ and non-adjacent to $u$ (in $G$), and by definition of $G^{c}$, this number is exactly the number of the vertices of $G^{c}$ which are of degree $(n-1)-i$ and adjacent with $u$ (in $G^{c}$). Therefore for each $i$, the coefficient of $x^{(n-1)-i}$ in the recent polynomial, equals exactly the number of the vertices of degree $i$ in $G^{c}$, which are adjacent to $u$ (in $G^{c}$). Thus (based on the meaning of the notation $(\rm{dp}(G)-\rm{dp}_{G}(u)- x^{\deg_{G}u})^{\curlywedge (n-1)-}$) the coefficient of $x^{i}$ in
\[(\rm{dp}(G)-\rm{dp}_{G}(u)- x^{\deg_{G}u})^{\curlywedge (n-1)-}\]
 is exactly the number of the neighbors of $u$ in $G^{c}$, which their degree is $i$.
\end{proof}

\vskip 0.8 true cm

\section{\bf Some open problems}
\vskip 0.4 true cm

Many new questions and open problems can arise from above topics. Some of them are:

(1) How can one characterize all degree polynomial sequences?

(2) Classify degree polynomial sequences of connected graphs and trees.

(3) Characterize all graphs whose degree polynomial sequences formed by polynomials with one term.

(4) Characterize all degree polynomial sequences which realize uniquely. 

\vskip 0.4 true cm




\end{document}